\tikzset{reverseclip/.style={insert path={(current bounding box.south west)rectangle 
(current bounding box.north east)} }} 
\newcommand{\boundellipse}[3]
{(#1) ellipse (#2 and #3)
}
\newtheorem{theorem}{Theorem}[section]
\newtheorem{lemma}[theorem]{Lemma}
\newtheorem{proposition}[theorem]{Proposition}
\newtheorem{claim}[theorem]{Claim}
\newcommand{\NN}{\mathbb N}
\newcommand{\PP}{\mathbb P}
\newcommand{\EE}{\mathbb E}
\newcommand{\cH}{\mathcal{H}}
\newcommand{\cL}{\mathcal{L}}
\newcommand{\cM}{\mathcal{M}}
\let\epsilon=\varepsilon
\let\phi=\varphi
\begin{document}

\setstretch{1.27}

\title{Note on set representation of Bounded Degree Hypergaphs}
\author[]{Ayush Basu}
\author[]{Griffin Johnston}
\author[]{Vojt\v ech R\"odl}
\author[]{Marcelo Sales}

\address{Department of Mathematics, Emory University, Atlanta, GA, USA}
\email{\{ayush.basu|john.johnston|vrodl\}@emory.edu}
\address{Department of Mathematics, University of California, Irvine, CA, USA}
\email{mtsales@uci.edu}
\thanks{The first and third authors were supported by  by NSF grant DMS 2300347, the third and fourth authors were supported by NSF grant DMS 1764385, and the fourth author was also supported by US Air Force grant FA9550-23-1-0298.}

\begin{abstract}
    In their classical paper, Erd\H{o}s, Goodman and P\'{o}sa \cite{EGP} studied the representation of a graph with vertex set $[n]$ by a family of subsets $S_1,\dots, S_n$ with the property that $\{i,j\}$ is an edge if and only if $S_i\cap S_j\neq \emptyset$. In this note, we consider a similar representation of bounded degree $r$-uniform hypergraphs and establish some bounds for a corresponding problem.
\end{abstract}

\maketitle
\section{Introduction}
A set $S$ \textit{represents} an $r$-uniform hypergraph $G$ if there is a family  $(S_v)_{v\in V(G)}$ of subsets of $S$ such that for any $\{v_1,\dots,v_r\}\subseteq V(G)$,
\begin{align*}
    \{v_1,\dots, v_r\} \in E(G) \iff \left|\bigcap_{i=1}^r S_{v_i}\right| \geq 1.
\end{align*}
One can observe that any $r$-uniform hypergraph can be represented by a finite set and similar to \cite{EGP}, we define \textit{representation number} of an $r$-uniform hypergraph $G$ denoted by $\theta(G)$ as the cardinality of the smallest set $S$ that \textit{represents} $G$. 
\par 
The study of representing graphs (the case where $r=2$) 
can be traced back to the work of
Szpilrajn-Marczewski in \cite{SzpilrajnMarszewski1945}.  
In \cite{EGP}, Erd\H{o}s, Goodman, and P\'osa
introduced the parameter $\theta(G)$ for 2-graphs, 
and proved that $\theta(G) \leq \lfloor n^2/4 \rfloor$ for any graph $G$ on $n$ vertices.  
\par 
For graphs $G$ on $n$ vertices whose complement $\overline{G}$ has bounded maximum degree, i.e., $\Delta(\overline{G})\leq \Delta$,  
Alon \cite{alon1986covering} proved that $\theta(G)\leq c_1\Delta^2\log n$. On the other hand, in \cite{eaton1996graphs} it was shown that for every $\Delta \geq 1$ there are graphs $G$ on $n$ vertices with $\Delta(\overline{G})\leq \Delta$ such that $\theta(G)\geq c_2\frac{\Delta^2}{\log \Delta}\log n$, showing that the upper bound is sharp up to a factor of $\log \Delta$. 
In \cite{rodl2022results}, these results were extended to $r$-uniform hypergraphs. 
\par
The related concept of $k$-representation where $k$ is a positive integer has been studied by a number of authors (for example see \cite{EatonGrable, EatonGouldRodl, chung1994p,furedi97}). For any integer $k>0$, a set $S$ \textit{$k$-represents} an $r$-uniform graph $G$ if there is a family $(S_v)_{v\in V(G)}$ of subsets of $S$ such that for any $\{v_1,\dots,v_r\}\subseteq V(G)$,
\begin{align*}
    \{v_1,\dots, v_r\} \in E(G) \iff \left|\bigcap_{i=1}^r S_{v_i}\right| \geq k.
\end{align*}
The \textit{$k$-representation number} of an $r$-uniform hypergraph $G$, denoted by $\theta_k(G)$, is the cardinality of the smallest set $S$ that \textit{$k$-represents} $G$. Note that for $k=1$, $\theta_1(G) = \theta(G)$ holds. 
\par
It may be natural to ask the following question: given a graph $G$, what is the smallest cardinality of a set $S$ for which there exists a positive integer $k$ such that $S$ $k$-represents $G$? In \cite{eaton1996graphs} the authors studied this question by defining the parameter,
\begin{align*}
    \Tilde{\theta}(G):= \min_{k\in \NN}\theta_k(G),
\end{align*}
for $2$-graphs. In particular, they proved that  $ \Tilde{\theta}(G)\leq c_3 \Delta^2 \log n$ for any graph $G$ on $n$ vertices with $\Delta(G)\leq \Delta$ and that, on the other hand, there exist graphs on $n$ vertices with $\Delta(G)\leq \Delta$ and $ \Tilde{\theta}(G)\geq c_4 \Delta\log (\frac{n}{2\Delta})$. Here we consider the parameter $\Tilde{\theta}(G)$ where $G$ is a bounded degree $r$-uniform hypergraph. 
\par
For a vertex $v$ in $V(G)$ in an $r$-uniform graph $G$, let the degree of $v$, denoted by $d(v)$, be the number of edges that contain $v$, and further let $\Delta(G)$ be the maximum degree of $G$. An $r$-uniform hypergraph $G$ is linear if the intersection of any two edges has size at most 1. 
We will prove the following theorems.
\begin{theorem}[Upper Bound]
\label{MainThm}
    For every $r\geq 3$, there exists a constant $C_r > 0$ and integers $\Delta_0=\Delta_0(r), n_0=n_0(r)$ such that if $G$ is an $r$-uniform hypergraph on $n\geq n_0$ vertices with $\Delta(G)= \Delta \geq \Delta_0$, then 
    \begin{align}
         \Tilde{\theta}(G) \leq C_r \Delta^3 \log n.
    \end{align}
    Further, if $G$ is linear, then 
    \begin{align}
         \Tilde{\theta}(G)\leq C_r \Delta^{2+\frac{1}{r-1}}\log n.
    \end{align}
\end{theorem}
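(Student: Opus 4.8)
The plan is to pass to an equivalent covering formulation and then construct the required family probabilistically from the edges of $G$. A ground set $S=[m]$ together with sets $(S_v)_{v\in V(G)}$ is the same data as a list of vertex subsets $C_1,\dots,C_m\subseteq V(G)$, via $C_s=\{v:s\in S_v\}$; for any $r$-set $T$ one then has $\big|\bigcap_{v\in T}S_v\big|=\#\{s:T\subseteq C_s\}$. Hence $S$ $k$-represents $G$ if and only if every edge of $G$ lies in at least $k$ of the $C_s$ and every non-edge in at most $k-1$ of them. So it suffices to build $C_1,\dots,C_m$ with $m$ small, together with a single threshold $k$, so that each edge is covered at least $k$ times and each non-edge fewer than $k$ times; this gives $\Tilde\theta(G)\le\theta_k(G)\le m$. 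The naive idea of putting each vertex into $C_s$ independently with probability $q$ fails, since then every $r$-set, edge or not, lies in $C_s$ with the same probability $q^r$; the sets must therefore be built to respect the edge structure of $G$.

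I would fix a parameter $\rho\in(0,1)$ and, independently for each $s\in[m]$, place each edge $e\in E(G)$ into a random collection $\mathcal{E}_s$ with probability $\rho$ (all choices independent), and set $C_s=\bigcup_{e\in\mathcal{E}_s}e$. For an $r$-set $T$ write $X_T=\#\{s:T\subseteq C_s\}$; since the trials are independent, $X_T\sim\mathrm{Bin}(m,p_T)$ with $p_T=\Pr[T\subseteq C_s]$. If $T=e_0$ is an edge, then $T\subseteq C_s$ whenever $e_0\in\mathcal{E}_s$, so $p_{e_0}\ge\rho$ and $X_{e_0}$ stochastically dominates $\mathrm{Bin}(m,\rho)$. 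Taking $k$ a little below $m\rho$, a Chernoff bound makes $\Pr[X_{e_0}<k]$ exponentially small in $m\rho$, and a union bound over the at most $n\Delta$ edges is affordable as soon as $m\rho\gtrsim\log n$.

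The crux is the non-edge bound, and this is where I expect the main obstacle. For a non-edge $f$ I must control $p_f=\Pr[f\subseteq C_s]$. Since $|f|=r=|e|$, no single edge contains $f$, so covering $f$ requires at least two selected edges. Writing the event as $\bigcap_{v\in f}\bigcup_{e\ni v}\{e\in\mathcal{E}_s\}$ and expanding over witness edges gives $p_f\le\sum_j(\#\,j\text{-edge covers of }f)\,\rho^{j}$. The dominant contribution uses one distinct edge per vertex of $f$: there are at most $\prod_{v\in f}d(v)\le\Delta^r$ such families, contributing $\le\Delta^r\rho^r$, while families in which a single edge covers two vertices of $f$ are governed by the number of edges through a pair of vertices, which is at most $\Delta$ in general and, crucially, at most $1$ when $G$ is linear. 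A short computation shows these terms decrease, giving $p_f\le C_r\Delta^2\rho^2$ in general and $p_f\le C_r\Delta^r\rho^r$ in the linear case. The obstacle is precisely that the ``one distinct edge per vertex'' configuration is shared verbatim by edges and non-edges and contributes the same order $\Delta^r\rho^r$ to both; the entire separation must come from the direct-selection advantage $p_{e_0}\ge\rho$ of edges, which forces $\rho$ to be small enough that $\Delta^2\rho^2\ll\rho$ (general) respectively $\Delta^r\rho^r\ll\rho$ (linear).

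To finish, I would take $\rho\asymp\Delta^{-2}$ in general and $\rho\asymp\Delta^{-r/(r-1)}$ in the linear case, and choose the threshold $k=\tfrac12 m\rho$, so that $m p_f<k<m p_{e_0}$ uniformly over all non-edges $f$ and edges $e_0$. Then over-covering a non-edge is an upper-tail event far above its mean, so $\Pr[X_f\ge k]$ is super-polynomially small, and a union bound over the at most $\binom{n}{r}\le n^{r}$ non-edges (together with the edge estimate) succeeds once $m\rho\gtrsim r\log n$, i.e.\ $m\asymp\rho^{-1}\log n$. This yields $m=O_r(\Delta^{2}\log n)$ in general and $m=O_r\big(\Delta^{1+\frac{1}{r-1}}\log n\big)$ for linear $G$, which in particular gives the two bounds $C_r\Delta^3\log n$ and $C_r\Delta^{2+\frac{1}{r-1}}\log n$ asserted in the theorem.
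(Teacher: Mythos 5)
Your proposal is correct, but it takes a genuinely different route from the paper, and in fact it appears to prove something slightly stronger. The paper first decomposes $E(G)$ into $L=\Delta r$ matchings (via the chromatic index of the line graph), gives each matching its own disjoint ground set $S_i$ of size $t\approx L^2\log n$, and chooses the sets $R_e\subseteq S_i$ for $e\in M_i$ as independent $p$-random subsets with $p=\tfrac{1}{4L}$; the key point is that within a matching each vertex lies in at most one edge, so a non-edge forces at least two distinct $R_e$'s to intersect and the per-block contribution is at most $(1+\epsilon)p^{2}t$ with no combinatorial multiplicity. The cost of this cleanliness is the factor $L$ from the disjoint union of the $S_i$'s, plus the requirement $t\gtrsim p^{-2}\log n$ needed to concentrate each individual intersection, which together give $Lt\approx L^3\log n\approx\Delta^3\log n$. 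Your dual formulation ($C_s=\{v:s\in S_v\}$) with a single shared pool and i.i.d.\ random edge-selections $\mathcal{E}_s$ avoids that loss: an edge is hit with probability at least $\rho$ by \emph{every} coordinate $s$, and you only need concentration of the counts $X_T$, not of individual intersections. The price is the witness-counting step for non-edges, where the multiplicity of two-edge covers (at most $O_r(\Delta^2)$ of them, each of probability $\rho^2$, with higher-order terms dominated once $\Delta\rho\le1$) forces $\rho\asymp\Delta^{-2}$ rather than $\asymp(\Delta r)^{-1}$; in the linear case the pair-degeneracy is $1$, the dominant configuration is the all-singletons cover with multiplicity $\le\Delta^r$, and $\rho\asymp\Delta^{-r/(r-1)}$ works (your "short computation" does require $\Delta\ge\Delta_0(r)$ to kill the intermediate partition types, consistent with the theorem's hypothesis). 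I checked these counts and the Chernoff/union-bound bookkeeping, and they go through, yielding $m=O_r(\Delta^2\log n)$ in general and $m=O_r(\Delta^{1+\frac{1}{r-1}}\log n)$ in the linear case --- each a factor of roughly $\Delta$ better than the paper's bounds and matching the known $r=2$ bound of Eaton--R\"odl type. Since the paper explicitly poses narrowing the gap $\Omega(\Delta\log n)\le\Tilde{\theta}(G)\le O(\Delta^3\log n)$ as an open problem, you should write your argument out in full detail (in particular the grouping of witness functions by the partition they induce on the non-edge, and the stochastic domination used for the edge lower tail), as it seems to improve the stated result rather than merely reprove it.
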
 

\begin{theorem}[Lower Bound]
\label{MainThm2}
    For every $r \geq 3$, there exists an integer $n_0=n_0(r)$ such 
    that for every $n\geq n_0$ and $\Delta$, there exists an $r$-uniform hypergraph $G$ on $n$ vertices with $\Delta(G)\leq \Delta$ such that, 
    \begin{align}
         \Tilde{\theta}(G) \geq \frac{\Delta}{4} \log n.
    \end{align}
\end{theorem}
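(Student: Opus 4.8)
The plan is to argue by counting: I will compare the number of $r$-uniform hypergraphs on $[n]$ with maximum degree at most $\Delta$ against the number of those admitting an economical $k$-representation for \emph{some} $k$, and show that the former strictly exceeds the latter. Fix $s := \lfloor \tfrac{\Delta}{4}\log n\rfloor$ (all logarithms base two). Let $\mathcal{A}$ be the set of $r$-uniform hypergraphs $G$ on $[n]$ with $\Delta(G)\le\Delta$, and let $\mathcal{B}$ be the set of those with $\Tilde{\theta}(G)\le s$. If I can show $|\mathcal{A}|>|\mathcal{B}|$ then $\mathcal{A}\not\subseteq\mathcal{B}$, so some $G$ has $\Delta(G)\le\Delta$ while $\Tilde{\theta}(G)>s$, which after replacing $s$ by $\lceil\tfrac{\Delta}{4}\log n\rceil$ yields the theorem. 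We may assume $\Delta$ lies below the threshold at which bounded-degree hypergraphs become scarce, i.e. in the range where $|\mathcal{A}|$ still dominates.

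The upper bound on $|\mathcal{B}|$ is the routine half. If $\Tilde{\theta}(G)\le s$ then, since $\Tilde{\theta}=\min_k\theta_k$, there is an integer $k$ and a ground set $S$ with $|S|=t\le s$ together with a family $(S_v)_{v\in[n]}$ of subsets of $S$ that $k$-represents $G$. After relabelling we may take $S=[t]$, and because any intersection of subsets of $[t]$ has size at most $t$, only the values $k\in\{1,\dots,t\}$ can yield a nonempty edge set. Each vertex is assigned one of $2^{t}$ subsets, so the number of triples $(t,k,(S_v)_v)$, and hence
\begin{align*}
|\mathcal{B}| \le \sum_{t=1}^{s} t\,2^{tn} \le s^{2}\,2^{sn}.
\end{align*}
The essential point is that passing to the minimum over $k$ costs only the harmless factor $\sum_k$, so this bound is barely larger than the single-$k$ count.

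The heart of the matter is a matching lower bound on $|\mathcal{A}|$. Every $\Delta$-regular $r$-uniform hypergraph lies in $\mathcal{A}$, so it suffices to enumerate these, and I would do so with the configuration model: give each of the $n$ vertices $\Delta$ stubs and count partitions of the $\Delta n$ stubs into $\Delta n/r$ blocks of size $r$ (assuming $r\mid\Delta n$, which costs nothing), namely $\tfrac{(\Delta n)!}{(r!)^{\Delta n/r}(\Delta n/r)!}$ of them. Dividing by the stub-relabelling overcount $(\Delta!)^{n}$ and discarding the configurations producing a repeated vertex inside an edge or a repeated edge, a Stirling estimate gives a leading term
\begin{align*}
\log|\mathcal{A}| \;\ge\; \Big(1-\tfrac1r - o(1)\Big)\,\Delta n\log n,
\end{align*}
in the sparse and moderate regimes of $\Delta$. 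This is the step I expect to be the main obstacle: one must verify that the degenerate configurations (loops and multi-edges) and the $(\Delta!)^{n}$ correction erode only a $(1-o(1))$ fraction of the exponent, so that the clean coefficient $1-\tfrac1r$ survives; for growing $\Delta$ this requires carrying the $\Delta n\log\Delta$ corrections carefully, and it is precisely these corrections that cap the admissible range of $\Delta$.

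Finally I would compare the two counts. Since $r\ge 3$ gives $1-\tfrac1r\ge \tfrac23>\tfrac14$, for all sufficiently large $n$ (depending only on $r$) we have
\begin{align*}
\log|\mathcal{A}| \;\ge\; \Big(1-\tfrac1r-o(1)\Big)\Delta n\log n \;>\; \tfrac{\Delta}{4}\,n\log n + 2\log s \;=\; \log\!\big(s^{2}\,2^{sn}\big) \;\ge\; \log|\mathcal{B}|,
\end{align*}
so strictly more hypergraphs have maximum degree at most $\Delta$ than have $\Tilde{\theta}\le s$. Hence some $r$-uniform hypergraph $G$ on $[n]$ satisfies $\Delta(G)\le\Delta$ and $\Tilde{\theta}(G)>s$, giving $\Tilde{\theta}(G)\ge \tfrac{\Delta}{4}\log n$ as claimed. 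The comfortable gap between $\tfrac23$ and $\tfrac14$ is exactly what lets the argument absorb the $o(1)$ enumeration losses, the lower-order $\log s$ terms, and any convention for the base of the logarithm.
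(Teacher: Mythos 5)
Your overall framework is the same as the paper's: a pigeonhole comparison between the number of bounded-degree hypergraphs on $[n]$ and the number of hypergraphs $k$-representable on a set of size at most $s=\lfloor\frac{\Delta}{4}\log n\rfloor$. Your bound $|\mathcal{B}|\le\sum_{t\le s}t\,2^{tn}\le s^2 2^{sn}$ is fine (indeed slightly more careful than the paper, which accounts only for the $2^{tn}$ choices of the family $(S_v)_v$ and absorbs the choices of $t$ and $k$ into slack). The problem is that the other half of the comparison --- the lower bound on $|\mathcal{A}|$ --- is the entire content of the theorem, and you do not prove it. You assert $\log|\mathcal{A}|\ge(1-\frac1r-o(1))\Delta n\log n$ via the configuration model for $\Delta$-regular $r$-uniform hypergraphs, and you yourself flag that the required verification (that loops, repeated edges, and the $(\Delta!)^n$ stub-relabelling correction only cost a $(1-o(1))$ factor in the exponent) is ``the main obstacle'' and is left undone. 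As written this is a genuine gap: for growing $\Delta$ the $(\Delta!)^n$ division alone removes a term of order $\Delta n\log\Delta$ from the exponent, which is why your argument only survives in a capped range of $\Delta$, and the probability that a random configuration is simple is a delicate quantity for hypergraphs that you cannot wave away.

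The paper fills exactly this hole with a much more elementary device that you should adopt (or at least be aware of): it lower-bounds $|\cH^{(r)}(n,\Delta)|$ by $\binom{|\cM^{(r)}(n)|}{\Delta}\ge\bigl(|\cM^{(r)}(n)|/n\bigr)^{\Delta}$, where $\cM^{(r)}(n)$ is the set of almost perfect matchings of $r$-tuples on $[n]$, and then proves by a direct factorial computation that $|\cM^{(r)}(n)|\ge(n/er)^{n/2}$. This immediately gives $\log|\cH^{(r)}(n,\Delta)|\ge\frac{\Delta n}{4}\log n$ for $n\ge n_0(r)$, which is all that is needed against $2^{sn}$; no regularity, no configuration model, and no simplicity estimates are required. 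If you prefer to keep your route, you must actually carry out the Stirling computation with the $(\Delta!)^n$ correction and the simplicity probability, and state explicitly the range of $\Delta$ in which the coefficient stays above $\tfrac14$; until then the proposal does not constitute a proof.
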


\section{Proof of Upper Bound}
To prove \autoref{MainThm}, we will first decompose the edges of the $r$-uniform hypergraph $G$ into matchings $M_1,\dots, M_L$ for some integer $L$ using \cref{matchingdecomposition}.  We will represent $G$ by a union of $L$ disjoint subsets $S_1,\dots, S_L$ and a family $(R_e)_{e\in E(G)}$ such that $R_e$ is a subset of $S_i$ whenever $e$ is in $M_i$. \cref{Chernoff Lemma} asserts the existence of such families. We will then assign to each $v$ in $V(G)$, the set $S_v$ which will be the disjoint union of all $R_e$ such that $v\in e$ and show that this forms a $k$-representation for some $k\in \NN$. This is done in \cref{MainLemma}.
\\
Given an $r$-uniform graph $G$, let $\chi'(G)$ denote the chromatic index of $G$, defined as the smallest integer $L$ such that $E(G)$ can be decomposed into $L$ matchings. 

\begin{lemma}[Matching Decomposition]
\label{matchingdecomposition}
    If $G$ is an $r$-uniform hypergraph on $n$ vertices and $\Delta(G)\leq \Delta$, then $\chi'(G)\leq L=\Delta\cdot r.$
\end{lemma}

\begin{proof}
    Let $\cL$ be the 2-graph such that $V(\cL) = E(G)$ and, 
    \begin{align*}
        E(\cL) = \{\{e,f\}\subseteq E(G): e\neq f \text{ and } e\cap f \neq \emptyset\}.
    \end{align*}
    Then for any $e$ in $E(G)$, there are at most $(\Delta - 1)\cdot r$ edges $f$ such that $f\neq e$ and $f\cap e\neq \emptyset$. Thus the maximum degree of $\cL$ is $(\Delta -1) r$ and so $\chi(\cL)\leq (\Delta-1) r + 1\leq \Delta r$. For a proper coloring of $\cL$, with $L=\Delta r$ colors, each color class is an independent set in $\cL$ and thus a matching in $G$. Thus $E(G)$ can be decomposed into matchings $M_1,\dots, M_L$, each corresponding to a color class.
\end{proof}
In the following lemma, we will use $x=(a\pm b)$ to denote the inequality, $a-b\leq x\leq a+b$.
\begin{lemma}\label{Chernoff Lemma}
    Let $m, \epsilon, p$ such that $2\leq m\leq r$, $0<\epsilon<1$, $0\leq p\leq 1$. There exists an integer $n_0 = n_0(r)$ such that if $n$ and $t$ are integers satisfying $n\geq n_0$, and 
    \begin{align*}
        t\geq \frac{3(m+1)\log n}{\epsilon^2 p^m},
    \end{align*}
    then there exists a family of subsets $(R_i)_{i\in [n]}$ of a set $S$ of size $t$, such that
    \begin{align}\label{Size of intersections from chernoff}
        \left|\bigcap_{j\in I}R_j\right| = (1\pm \epsilon)p^l t \text{ for every  } I\in [n]^{(l)},
    \end{align}
    whenever $1\leq l\leq m$.
\end{lemma}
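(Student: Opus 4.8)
The plan is to build the family $(R_i)_{i\in[n]}$ by a random construction and argue that the desired property holds with positive probability. Fix a ground set $S$ with $|S|=t$. We may assume $p>0$, since when $p=0$ the stated lower bound on $t$ cannot be met by any finite $t$ and the claim is vacuous. For each pair $(i,s)\in[n]\times S$, I would place $s$ into $R_i$ independently with probability $p$, producing $nt$ mutually independent Bernoulli$(p)$ indicator variables. This simultaneous independence is what makes both of the following computations go through.

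First I would analyze a single intersection. Fix $l$ with $1\le l\le m$ and a set $I\in[n]^{(l)}$. A given element $s\in S$ belongs to $\bigcap_{j\in I}R_j$ exactly when all $l$ of the independent events $\{s\in R_j\}_{j\in I}$ occur, which happens with probability $p^l$; since distinct elements of $S$ are handled independently, the size $X_I:=\bigl|\bigcap_{j\in I}R_j\bigr|$ is a binomial variable $\mathrm{Bin}(t,p^l)$ with $\EE[X_I]=p^l t$. The event that the intersection has the required size is precisely $|X_I-p^l t|\le \epsilon p^l t$. By the multiplicative Chernoff bound, valid for $0<\epsilon<1$,
\[
\Pr\bigl[\,|X_I-p^l t|>\epsilon p^l t\,\bigr]\le 2\exp\!\bigl(-\epsilon^2 p^l t/3\bigr).
\]
I would then take a union bound over all $l\in\{1,\dots,m\}$ and all $I\in[n]^{(l)}$, using the crude count $\binom{n}{l}\le n^l$.

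The crux of the argument is to check that the single threshold on $t$, which is calibrated to the scale $l=m$, controls every smaller scale at once. The key observation is that $p\le 1$ forces $p^l\ge p^m$ for $l\le m$, so the hypothesis $t\ge 3(m+1)\log n/(\epsilon^2 p^m)$ yields $\epsilon^2 p^l t/3\ge \epsilon^2 p^m t/3\ge (m+1)\log n$ uniformly in $l$. Consequently each failure probability is at most $2\exp(-(m+1)\log n)=2n^{-(m+1)}$, and multiplying by the at most $n^l$ choices of $I$ gives a contribution of at most $2n^{l-m-1}\le 2/n$ for each $l$. Summing over the $m$ values of $l$ bounds the total failure probability by $2m/n$, which is strictly below $1$ once $n_0=n_0(r)>2r\ge 2m$. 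For such $n$ some realization of the random family meets all the estimates in \autoref{Chernoff Lemma} simultaneously, which is exactly the assertion.

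The main obstacle is not any individual estimate but this uniform control across scales: one must confirm that the genuinely hardest case is $l=m$ (so that the smallest mean $p^m t$ dictates the threshold) and that the loose bound $\binom{n}{l}\le n^l$ on the number of $l$-wise intersections is absorbed by the exponent for every $l$ at once, rather than scale by scale. Once this is in place, the remainder is a routine application of independence and the standard Chernoff inequality, and the dependence of $n_0$ on $r$ enters only through $m\le r$.
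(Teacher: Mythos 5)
Your proposal is correct and takes essentially the same route as the paper: independent $p$-random subsets of $S$, a multiplicative Chernoff bound showing each $l$-wise intersection fails with probability at most $2n^{-(m+1)}$ (via $p^l\geq p^m$ and the hypothesis on $t$), and a union bound over all $l\leq m$ and all $I\in[n]^{(l)}$. Your treatment is if anything slightly more explicit than the paper's about why the single threshold calibrated to $l=m$ dominates all smaller $l$ and about the degenerate case $p=0$.
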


\begin{proof}
    Let $n_0= n_0(r)$ be an integer. Wherever necessary, we will assume $n_0$ is large enough. Let  $n,p,\epsilon,t$ be as given above. Let $S$ be a set of size $t$ and  $R_i$ for $i\in [n]$ be random subsets of $S$ with elements chosen independently, each with probability $p$. Fix $1\leq l\leq m$ and let $J\subseteq [n]^{(l)}$, then
    \begin{align*}
        \EE\left[\left|\bigcap_{j\in J} R_j\right|\right] = p^l t.
    \end{align*}
   Since the above random variable has a binomial distribution, we have:
   \begin{align*}
       \PP\left(\left|\bigcap_{j\in J} R_j\right|\neq (1\pm \epsilon)p^l t\right) &< 2\exp\left(-\frac{\epsilon^2p^l t}{3}\right)\\
       & \leq  2\exp\left(-\frac{m+1}{p^{m-l}}\log n\right)\\
       &< 2 n^{-(m+1)}.
   \end{align*}
   Thus, the probability that
   \begin{align*}\label{Size of intersections from chernoff}
        \left|\bigcap_{j\in I}R_j\right| = (1\pm \epsilon)p^l t \text{ for every  } J\subseteq [n]^{(l)},
    \end{align*}
    whenever $1\leq l\leq m$, is at least
    \begin{align*}
        1- \sum_{l=1}^m {n \choose l} 2n^{-(m+1)} > 0,
    \end{align*}
    for $n\geq n_0$ provided $n_0$ is large enough.
\end{proof}

\begin{lemma}\label{MainLemma}
    There exists a constant $A>0$ such that for every integer $r\geq 3$, there are positive integers $n_0=n_0(r)$, and $L_0=L_0(r)$, such that for every $n\geq n_0$ and $L\geq L_0$, if $G$ is an $r$-uniform graph on $n$ vertices with $\chi'(G)\leq L$,
    \[
        \Tilde{\theta}(G) \leq AL^3\log n .
    \]
    Moreover, if $G$ is linear, then 
    \[
        \Tilde{\theta}(G) \leq A(r+1)L^{2 + \frac{1}{r-1}} \log n. 
    \]
\end{lemma}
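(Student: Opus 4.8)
The plan is to follow the outline sketched before the lemma. Fix a decomposition $E(G)=M_1\sqcup\dots\sqcup M_L$ into matchings, which exists since $\chi'(G)\le L$. Take $L$ pairwise disjoint ground sets $S_1,\dots,S_L$, each of size $t$, and put $S=\bigsqcup_j S_j$, so $|S|=Lt$. In each block $j$, regard the edges of $M_j$ (at most $n$ of them) as indices and apply \autoref{Chernoff Lemma} with parameters $m,\epsilon,p$ (to be chosen, $2\le m\le r$, $0<\epsilon<1$, $0<p\le1$) to obtain subsets $(R_e)_{e\in M_j}\subseteq S_j$ all of whose $l$-wise intersections have size $(1\pm\epsilon)p^l t$ for $1\le l\le m$. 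Since the $L$ blocks use independent randomness and each succeeds with positive probability, all $L$ families of bounds hold simultaneously with positive probability, so such a configuration exists for $n\ge n_0(r)$. I then set $S_v=\bigsqcup_{e\ni v}R_e\subseteq S$.

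The heart is computing $\big|\bigcap_{i=1}^r S_{v_i}\big|$. As each $M_j$ is a matching, every vertex $v$ lies in at most one edge $e_v^{(j)}\in M_j$, so $S_v\cap S_j=R_{e_v^{(j)}}$ or $\emptyset$, whence $\big|\bigcap_i S_{v_i}\big|=\sum_j\big|\bigcap_i R_{e_{v_i}^{(j)}}\big|$, the $j$-th term vanishing unless all $v_i$ are covered in block $j$. If $\{v_1,\dots,v_r\}$ is an edge $e$, then in its block all $e_{v_i}^{(j)}$ equal $e$, so the term is $|R_e|\ge(1-\epsilon)pt$. If it is a non-edge, then in every covered block the covering edges cannot all coincide (else the $r$ vertices would fill a single edge), so that term is an intersection of $d_j\ge2$ distinct sets. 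The strategy is to pick an integer threshold $k$ with $(\text{non-edge maximum})<k\le(1-\epsilon)pt$; edges then exceed $k$ and non-edges fall below it, giving a $k$-representation.

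For the general bound I would take $m=2$ and bound each non-edge block contribution crudely: $\big|\bigcap_i R_{e_{v_i}^{(j)}}\big|\le|R_a\cap R_b|\le(1+\epsilon)p^2t$ for two distinct covering edges $a,b$. With at most $L$ covered blocks, the non-edge total is at most $L(1+\epsilon)p^2t$. Choosing $p=\Theta(1/L)$ renders this below $(1-\epsilon)pt$, and \autoref{Chernoff Lemma} then permits $t=\Theta(L^2\log n)$, so $|S|=Lt=O(L^3\log n)$. One must also check that $(1-\epsilon)pt-L(1+\epsilon)p^2t=\Theta(L\log n)\ge1$ for $n\ge n_0$, so an integer $k$ exists.

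The linear bound is the delicate part and the main obstacle: the estimate $d_j\ge2$ is now too lossy, so I would take $m=r$ and keep the exact contribution $(1\pm\epsilon)p^{d_j}t$. Let $B_d$ count the covered blocks in which $v_1,\dots,v_r$ meet exactly $d$ distinct edges; the non-edge total is at most $(1+\epsilon)t\sum_{d=2}^r B_d p^d$. Linearity controls the dangerous small-$d$ terms: any pair of vertices lies in at most one edge of $G$, hence is co-covered in at most one block, so the total number of co-covered pairs among $v_1,\dots,v_r$ over all blocks is at most $\binom r2$; since a block with $d$ distinct edges forces at least $r-d$ co-covered pairs, this gives $\sum_{d<r}B_d(r-d)\le\binom r2$ and thus $B_d\le\binom r2/(r-d)$. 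Splitting the sum into the $d=r$ term (at most $Lp^r$) and the $d<r$ terms (at most $c_r\,p^2$ for a constant $c_r$ depending only on $r$), I would balance the two competing parts by setting $p=(\alpha/L)^{1/(r-1)}$, so $Lp^{r-1}=\alpha$ is a small constant while $p\to0$ as $L\to\infty$; then $\sum_d B_d p^d\ll p$ for $L\ge L_0(r)$, putting the non-edge total below threshold. Finally \autoref{Chernoff Lemma} forces $t=\Theta\big((r+1)L^{r/(r-1)}\log n\big)$, giving $|S|=Lt=O\big((r+1)L^{2+1/(r-1)}\log n\big)$, since $1+\tfrac{r}{r-1}=2+\tfrac1{r-1}$. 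The conceptual content lies in the co-covered-pairs count and the optimization of $p$; the remaining work is only the bookkeeping of constants and the nonemptiness of the interval for $k$.
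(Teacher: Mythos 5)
Your proposal is correct and follows essentially the same route as the paper: the same block-by-block construction via \cref{matchingdecomposition} and \cref{Chernoff Lemma}, the same choices $m=2$, $p=\Theta(1/L)$ for the general case and $m=r$, $p=\Theta(L^{-1/(r-1)})$ for the linear case, and the same use of linearity to show that all but $O_r(1)$ covered blocks contribute $p^r t$ rather than $p^2 t$. Your co-covered-pairs count $\sum_{d<r}B_d(r-d)\le\binom r2$ is a mild refinement of the paper's simpler observation that at most $\binom r2$ matchings contain an edge meeting $\{v_1,\dots,v_r\}$ in two or more vertices, but the argument is the same.
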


\begin{proof}
    Fix $r\geq 3$. Let $n_0(r), L_0(r)$ be integers that are assumed to be large enough wherever necessary. Let $G$ be any $r$-uniform hypergraph on $n$ vertices with $\chi'(G)\leq L$. Let $E(G)$ be decomposed into matchings $M_1,\dots, M_L$ with $L\geq L_0$, that is 
    \[
        E(G) = M_1 \sqcup \cdots \sqcup M_L.
    \]
    In what follows, we will give two separate upper bounds for general $r$-uniform hypergraphs and linear $r$-uniform hypergraphs. In each of these cases, we will fix parameters $m,p$ and consider pairwise disjoint subsets $\{S_i:i\in [L]\}$, each of size $t=12(m+1)p^{-m}\log n$, along with families of subsets $(R_e)_{e\in M_i}$ satisfying \cref{Size of intersections from chernoff}. The parameter $m$ will allow us to control the size of $m$-wise intersections for the families $(R_e)_{e\in M_i}$. When $G$ is any $r$-uniform hypergraph (not necessarily linear), we will choose $m=2,p=\frac{1}{4L}$, while when $G$ is a linear $r$-uniform hypergraph, we choose $m=r, p = (\frac{1}{4L})^{\frac{1}{r-1}}$. However, since the analysis for the two cases follow the same steps, we will prove \cref{bounds on intersections} for a general parameter $2\leq m\leq r$. We will then use it to prove the bounds, considering the cases when $G$ is a general $r$-uniform hypergraph (not necessarily linear) and when $G$ is linear. 
    \\[0.3cm]
    \textbf{Construction of Representation:} Fix an integer $m$ such that $2\leq m\leq r$. Let $G$ be an $r$-uniform hypergraph with the matching decomposition $E(G)= M_1\sqcup \cdots \sqcup M_L$. Let $\{S_i\}_{i=1}^L$ be a collection of pairwise disjoint sets of size $t$ and for each $i \in [L]$, let $(R_e)_{e \in M_i}$ be a family of subsets of $S_i$ satisfying \cref{Size of intersections from chernoff}. 
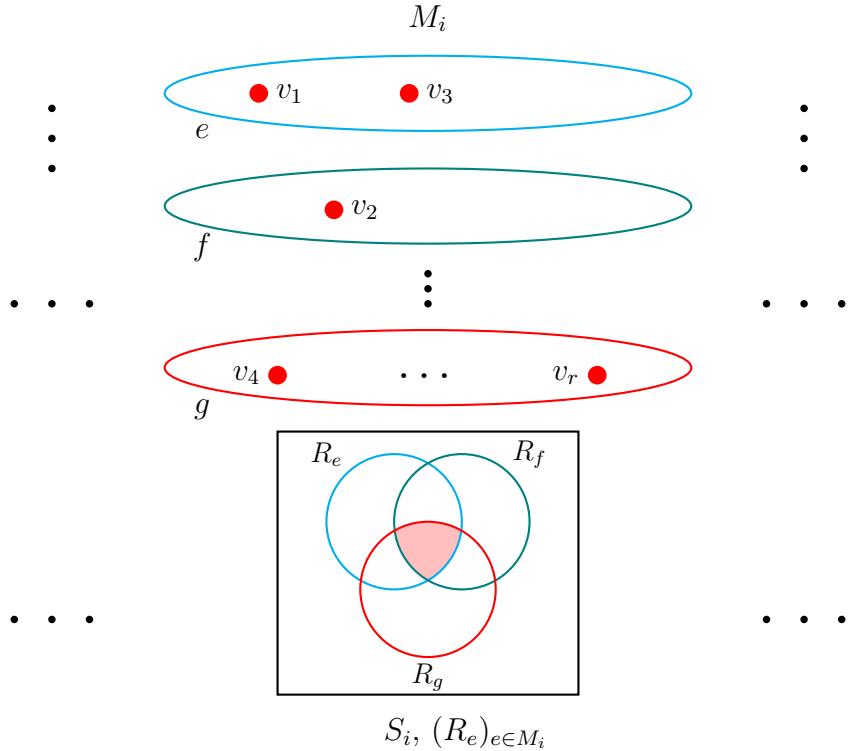
\begin{figure}[b!]
    \centering
      \begin{tikzpicture}[thick,
  fsnode/.style={circle, fill=black, inner sep = 1pt},
   ssnode/.style={circle, fill=red, inner sep = 2.5pt},
    shorten >= -5pt,shorten <= -5pt
]
\begin{scope}[xshift = 0cm]
\node at (0,4) {$M_i$};
\draw[cyan] \boundellipse{0,3}{3.5}{.5};
\draw[teal] \boundellipse{0,1.5}{3.5}{.5};
\draw[red] \boundellipse{0,-.65}{3.5}{.5};
\node[fsnode] at (0,.6) (a) {};
\node[fsnode] at (0,.4) (a) {}; 
\node[fsnode] at (0,.2) (a) {};

\node[ssnode, label={[xshift=1em, yshift= -1em] $v_1$ }] at (-2.25,3) (v1) {};
\node[ssnode, label={[xshift=1em, yshift= -1em] $v_2$ }] at (-1.25,1.45) (v2) {};
\node[ssnode, label={[xshift=1em, yshift= -1em] $v_3$}] at (-.25,3) (v2) {};
\node[ssnode, label={[xshift=-1em, yshift= -1em] $v_r$}] at (2.25,-.75) (v2) {};
\node at (-3,2.5) {$e$};
\node at (-3,.95) {$f$};
\node at (-3,-1.2) {$g$};
\node[ssnode, label={[xshift=-1em, yshift= -1em] $v_4$}] at (-2,-.75) (v2) {};
\node[label = {[xshift = 0em, yshift = -1.1em] \Large$\cdots$} ] at (0,-.75) (dots) {};
\draw[draw=black] (-2,-1.5) rectangle (2,-5 );
\node at (0.5,-5.5) {$S_i$, $(R_e)_{e\in M_i}$};
\end{scope}

\begin{scope}[scale = 0.9, yshift = -3cm, xshift = -0.5cm, every node/.append style={transform shape} ]
\draw[thick, cyan] (0,0) circle (1.0cm);
\draw[thick, teal] (1,0) circle (1.0cm);
\draw[thick, red] (0.5,-1) circle (1.0cm);

\node at (-1,1) {$R_e$};
\node at (2,1) {$R_f$};
\node at (0.5,-2.3) {$R_g$};
\begin{scope}[on background layer]
\clip (0,0) circle (1.0cm);
\clip (1,0) circle (1.0cm);
\fill[pink] (0.5,-1) circle (1.0cm);
\end{scope}

\end{scope}

\begin{scope}[xshift = -5cm, yshift = 0.2cm]
    \node[fsnode] at (-0.5,0) (a) {};
\node[fsnode] at (0,0) (a) {}; 
\node[fsnode] at (0.5,0) (a) {};
\end{scope}

\begin{scope}[xshift = 5cm, yshift = 0.2cm]
    \node[fsnode] at (-0.5,0) (a) {};
\node[fsnode] at (0,0) (a) {}; 
\node[fsnode] at (0.5,0) (a) {};
\end{scope}

\begin{scope}[xshift = 5cm, yshift = -4cm]
    \node[fsnode] at (-0.5,0) (a) {};
\node[fsnode] at (0,0) (a) {}; 
\node[fsnode] at (0.5,0) (a) {};
\end{scope}

\begin{scope}[xshift = -5cm, yshift = -4cm]
    \node[fsnode] at (-0.5,0) (a) {};
\node[fsnode] at (0,0) (a) {}; 
\node[fsnode] at (0.5,0) (a) {};
\end{scope}

\begin{scope}[xshift = 5cm, yshift = 2cm]
\node[fsnode] at (0,0.8) (a) {};
\node[fsnode] at (0,0.4) (a) {}; 
\node[fsnode] at (0,0.0) (a) {};
\end{scope}

\begin{scope}[xshift = -5cm, yshift = 2cm]
\node[fsnode] at (0,0.8) (a) {};
\node[fsnode] at (0,0.4) (a) {}; 
\node[fsnode] at (0,0.0) (a) {};
\end{scope}

\end{tikzpicture}
    \caption{Pairwise disjoint sets $S_i$ with the families $(R_e)_{e\in M_i}$. The shaded area corresponds to $R_e\cap R_f\cap R_g = R(v_1,i)\cap R(v_2,i) \cap \cdots \cap R(v_r,i).$}
    \label{fig:matching_with_sampling}
\end{figure} 
    For any $v \in V(G)$ and $i \in [L]$, let
    \begin{align}
    \label{definitionR(vi)}
        R(v,i) = 
        \begin{cases} 
        R_e & \text{if there exists an $e \in M_i$ such that 
        $v \in e$,} \\
        \emptyset & \text{otherwise}.
        \end{cases}
    \end{align}
    We construct the representation of 
    $G$ as follows. For every $v \in V(G)$, define 
    \begin{align}
    \label{S_v}
        S_v := \bigcup_{i=1}^L R(v,i).
    \end{align}
    Observe that, for any $\{v_1,\dots, v_r\} \subseteq V(G)$, 
    \begin{align}\label{tuple intersection size}
        \left| \bigcap_{j=1}^r S_{v_j} \right| = 
        \sum_{i=1}^L \left| \bigcap_{j=1}^r R(v_j, i) \right|.
    \end{align}
     If there exists a $k \in \mathbb{N}$ such that  
    \begin{align}
    \label{krepresentation}
         \sum_{i=1}^L \left| \bigcap_{j=1}^r R(v_j, i) \right| \geq k \iff \{v_1,\dots, v_r\} \in E(G),
    \end{align}
    then $S_1\sqcup \cdots \sqcup S_L$ $k$-represents $G$ and $\Tilde{\theta}(G)\leq \theta_k(G)\leq |S_1|+\cdots |S_L|= Lt$. We will now find such a $k$ by giving a  lower and upper bound on $|R(v_1,i)\cap \cdots \cap R(v_r,i)|$ for each $i\in [L]$ when $\{v_1,\dots,v_r\}$ is an edge and non-edge respectively.   
    \\[0.3cm]
    \textbf{Bounding the size of intersections $|R(v_1,i)\cap \cdots \cap R(v_r,i)|$:} Note that given a fixed $i\in [L]$, the sets $R(v_j,i)$ are not necessarily distinct for distinct $j$. For example, \cref{fig:matching_with_sampling} depicts the situation when $R(v_1,i)=R(v_3,i)=R_e$. Further, for a fixed $i\in [L]$, since the families $(R_e)_{e\in M_i}$ satisfy \cref{Size of intersections from chernoff}, $|R(v_1,i)\cap \cdots \cap R(v_r,i)|$  ``shrinks" with the number of distinct $R(v_j,i)$ for $j\in \{1,\dots, r\}$. 
    \\
    In particular, if $e=\{v_1,\dots, v_r\}$ is an edge, then it is in some matching $M_i$, and the sets $R(v_j,i)=R_e$ and the size of the intersection, $|R(v_1,i)\cap \cdots \cap R(v_r,i)|$, is roughly $pt$. On the other hand, if $\{v_1,\dots, v_r\}$ is not an edge, then for every matching $M_i$, there are at least two distinct $R(v_j,i)$ for $j\in \{1,\dots, r\}$ and $|R(v_1,i)\cap \cdots \cap R(v_r,i)|$ is at most $p^2t$. \cref{bounds on intersections} below states a slightly stronger version of this observation. Before we state it, it will be convenient to introduce some notation. 
    \\
    Let $\{v_1,\dots,v_r\}$ be an $r$-tuple. Given a matching $M_i$, let
    \begin{align}
        a_i = a_i(\{v_1,\dots, v_r\}) := \Big|\{ e \in M_i \ : \ e \cap \{v_1,\dots, v_r\} \neq \emptyset \}\Big|,
    \end{align}
    i.e, $a_i$ is the number of edges in the matching $M_i$ that intersect $\{v_1,\dots, v_r\}$. Further, let,
    \begin{itemize}
        \item[] $I_1 = I_1(\{v_1,\dots,v_r\}) = \{i\in [L]: \{v_1,\dots, v_r\}\nsubseteq\cup_{e\in M_i}e\}$ and, 
        \item[] $I_2 =  I_2(\{v_1,\dots,v_r\}) = \{i\in [L]: \{v_1,\dots, v_r\}\subseteq\cup_{e\in M_i}e \}$,
    \end{itemize}
    i.e., $I_1$ and $I_2$ are the sets of those $i\in [L]$ such that the union of the edges in $M_i$ do not and do cover the sets $\{v_1,\dots, v_r\}$, respectively. 
    \begin{proposition}
    \label{bounds on intersections}
    For every $i\in [L]$, let $(R_e)_{e\in M_i}$ be a family that satisfies \cref{Size of intersections from chernoff} with a fixed integer $m$ such that $2\leq m\leq r$ and for every $v\in V(G)$, let $R(v,i)$ be as given in \cref{definitionR(vi)}. Then, for every $\{v_1,\dots, v_r\}\in E(G)$, there exists $i\in [L]$ such that
        \begin{align}\label{edge inequality general}
        \left|\bigcap_{j=1}^r R(v_j,i)\right| \geq (1-\epsilon) pt,
    \end{align}
    and for every $\{v_1,\dots,v_r\}\notin E(G)$, 
    \begin{itemize}
        \item If $i\in I_1$, then 
         \begin{align}
    \label{emptysetbound}
        \left| \bigcap_{j=1}^r R(v_j, i) \right| = 0. 
    \end{align}
        \item If $i\in I_2$, and $a_i\leq m$, then 
        \begin{align}
        \label{nonedgeintersection}
        \left|\bigcap_{j=1}^r R(v_j, i)\right| \leq 
        (1+\epsilon)p^{a_i}t. 
        \end{align}
        Further, $a_i \geq 2$.
    \end{itemize}
    \end{proposition}
\begin{proof}
\begin{figure}[t!]
\centering
\begin{subfigure}[b]{.45\textwidth}   
      \begin{tikzpicture}[thick,
  fsnode/.style={circle, fill=black, inner sep = 1pt},
  ssnode/.style={circle, fill=red, inner sep = 2.5pt},
    shorten >= -5pt,shorten <= -5pt, scale = .75
]
\node at (0,4) {$M_i$};
\draw \boundellipse{0,3}{3.5}{.5};
\draw \boundellipse{0,1.5}{3.5}{.5};
\draw \boundellipse{0,0}{3.5}{.5};
\node[fsnode] at (0,-.75) (a) {};
\node[fsnode] at (0,-1) (a) {}; 
\node[fsnode] at (0,-1.25) (a) {};
\node[ssnode, label={[xshift=1em, yshift= -1em]  $v_{j_i}$}] at (-2,-2) (v1) {};
\node[ssnode, label={[xshift=-1.5em, yshift= -1em] $v_{1}$ }] at (-1.25,1.45) (v2) {};
\node[label = {[xshift = 0em, yshift = -1.1em] \Large$\cdots$} ] at (.1,1.45) (dots) {};
\node[ssnode, label={[xshift=1.4em, yshift= -1em] $v_{j_i-1}$ }] at (1.25,1.45) (vr) {};
\node[ssnode, label={[xshift=-1.5em, yshift= -1em] $v_{j_i+1}$ }] at (-1.25,0) (v2) {};
\node[label = {[xshift = 0em, yshift = -1.1em] \Large$\cdots$} ] at (.1,0) (dots) {};
\node[ssnode, label={[xshift=1.4em, yshift= -1em] $v_{r}$ }] at (1.25,0) (vr) {};
\end{tikzpicture}
\centering
    \caption{Case I: $i\in I_1$}
    \label{fig: Case 1}
\end{subfigure} 
\hfill
\begin{subfigure}[b]{.45\textwidth}
    \centering
      \begin{tikzpicture}[thick,
  fsnode/.style={circle, fill=black, inner sep = 1pt},
  ssnode/.style={circle, fill=red, inner sep = 2.5pt},
    shorten >= -5pt,shorten <= -5pt, scale =.75
]
\node at (0,4) {$M_i$};
\draw \boundellipse{0,3}{3.5}{.5};
\draw \boundellipse{0,1.5}{3.5}{.5};
\draw \boundellipse{0,0}{3.5}{.5};
\node[fsnode] at (0,-.75) (a) {};
\node[fsnode] at (0,-1) (a) {}; 
\node[fsnode] at (0,-1.25) (a) {};
\draw \boundellipse{0,-2}{3.5}{.5};
\node[ssnode, label={[xshift=1em, yshift= -1em] $v_1$ }] at (-2.25,3) (v1) {};
\node[ssnode, label={[xshift=1em, yshift= -1em] $v_2$ }] at (-1.25,1.45) (v2) {};
\node[ssnode, label={[xshift=1em, yshift= -1em] $v_3$}] at (-.25,3) (v2) {};
\node[ssnode, label={[xshift=-1em, yshift= -1em] $v_r$}] at (2.25,0) (v2) {};
\node[ssnode, label={[xshift=-1em, yshift= -1em] $v_4$}] at (-2,0) (v2) {};
\node[label = {[xshift = 0em, yshift = -1.1em] \Large$\cdots$} ] at (0,0) (dots) {};
\end{tikzpicture}
    \caption{Case II: $i\in I_2$}
    \label{fig: Case 2}
\end{subfigure} 
\caption{ }
\end{figure}
     If $\{v_1,\dots, v_r\} \in E(G)$, then 
    there is an $i \in [L]$ such that 
    $\{v_1,\dots, v_r\} \in M_i$ and hence for all $1\leq j\leq r$, $R(v_j,i)$ are identical. Consequently, $a_i=1$ and
    \begin{align*}
       \left|\bigcap_{j=1}^r R(v_j,i)\right| \geq (1-\epsilon) pt.
    \end{align*}
Next, we fix $\{v_1,\dots, v_r\} \notin E(G)$, and consider the following cases. Case I (\cref{fig: Case 1}) considers matchings with isolated vertices, and implies \cref{emptysetbound}, while Case II (\cref{fig: Case 2}) considers matchings with no isolated vertices, and implies \cref{nonedgeintersection}.

\begin{description}
    \item[Case I] Let $i\in I_1$, i.e., $\{v_1,\dots, v_r\}\nsubseteq\cup_{e\in M_i}e $. Then there is a $v_{j_i}$ that is not in any edge in $M_i$ (\cref{fig: Case 1}) and thus $R(v_{j_i}, i) = \emptyset$. Consequently,
    \begin{align*}
        \left| \bigcap_{j=1}^r R(v_j, i) \right| = 0. 
    \end{align*}
    \item[Case II] Let $i\in I_2$, i.e., $\{v_1,\dots, v_r\}\subseteq\cup_{e\in M_i}e $. Then every $v_j\in \{v_1,\dots, v_r\}$ is contained in some edge in $M_i$ (\cref{fig: Case 2}). Since $\{v_1,\dots, v_r\}$ is not an edge, there are at least two such edges in $M_i$, and thus $a_i\geq 2$. Further, since $(R_e)_{e\in M_i}$ satisfy \cref{Size of intersections from chernoff}, whenever $l=a_i \leq m$, we have, 
    \[
        \left|\bigcap_{j=1}^r R(v_j, i)\right| \leq 
        (1+\epsilon)p^{a_i}t.  \qedhere
    \]
\end{description}
\end{proof}
Having described our construction of the representation and computed the bounds on $|R(v_1,i)\cap \cdots \cap R(v_r,i)|$ in \cref{bounds on intersections}, we use \cref{Chernoff Lemma} to show that such a construction exists and $k$-\textit{represents} $G$ for some $k\in \NN$. 
\\[0.3cm]
\textbf{General Case: }First consider the case where $G$ is any $r$-uniform hypergraph with $E(G)= M_1\sqcup \cdots \sqcup M_L$. Let $t = \lceil 576L^2\log n \rceil$, $m = 2$, $p = \frac{1}{4L}$, $\epsilon = 1/2$ and $k = \lfloor (1-\epsilon)pt\rfloor$. By \cref{Chernoff Lemma}, there exists pairwise disjoint sets $\{S_i:i\in [L]\}$, each of size $t$, and families of subsets $(R_e)_{e\in M_i}$ of $S_i$, satisfying \cref{Size of intersections from chernoff} with $m=2$. For every $v\in V(G)$, let $S_v$ be as in \cref{S_v}, in our construction of the representation. 
\\
For every $\{v_1,\dots, v_r\}\in E(G)$, \cref{bounds on intersections}, \cref{edge inequality general} implies that,
\begin{align*}
      \left| \bigcap_{j=1}^r S_{v_j} \right| = 
        \sum_{i=1}^L \left| \bigcap_{j=1}^r R(v_j, i) \right|\geq (1-\epsilon)pt\geq k.
\end{align*}
On the other hand, since,
\begin{align*}
L(1+\epsilon)p^2 t = \frac32 \cdot \frac{1}{16L}t < k.
\end{align*}
For every $\{v_1,\dots, v_r\}\notin E(G)$, by \cref{nonedgeintersection}, we have the upper bound,
\begin{align*}
      \left| \bigcap_{j=1}^r S_{v_j} \right| = 
        \sum_{i=1}^L \left| \bigcap_{j=1}^r R(v_j, i) \right|\leq \sum_{i=1}^L(1+\epsilon)p^{a_i} t\leq L(1+\epsilon)p^2 t <k.
\end{align*}
Consequently, $G$ can be $k$-\textit{represented} by the set $S_1\cup S_2\cup \dots \cup S_L$. 
This implies that, for $A= 577$, we have
\[
    \Tilde{\theta}(G)\leq \theta_k(G) \leq Lt \leq AL^3\log n.
\]
\\[0.3cm]
\textbf{Linear Case:} Let $G$ be a linear $r$-uniform hypergraph with $E(G)=M_1\sqcup\cdots \sqcup M_L$. 
\\
Let $t = \lceil 384(r+1)L^{\frac{r}{r-1}}\log n \rceil$, $m = r$, $p = \left(\frac{1}{4L}\right)^{\frac{1}{r-1}}$, $\epsilon = \frac12$ and $k = \lfloor (1-\epsilon) pt\rfloor$. By \cref{Chernoff Lemma}, there exists pairwise disjoint sets $\{S_i:i\in [L]\}$, each of size $t$, and families of subsets $(R_e)_{e\in M_i}$ of $S_i$, satisfying \cref{Size of intersections from chernoff} with $m=r$. For every $v\in V(G)$, let $S_v$ be as in \cref{S_v}, in our construction of the representation. 
\\
For every $\{v_1,\cdots, v_r\}\in E(G)$, in view of \cref{edge inequality general},
\begin{align*}
      \left| \bigcap_{j=1}^r S_{v_j} \right| = 
        \sum_{i=1}^L \left| \bigcap_{j=1}^r R(v_j, i) \right|\geq (1-\epsilon)pt\geq k.
\end{align*}
Now we consider the case where $\{v_1,\dots, v_r\} \notin E(G)$. Note that, if $i\in I_2 = I_2(\{v_1,\dots,v_r\})$, i.e. the edges of $M_i$ cover $\{v_1,\dots, v_r\}$, then $a_i =r$ if and only if each edge $e$ of $M_i$ satisfies $|e\cap \{v_1,\dots, v_r\}|\leq 1$. By linearity of $G$, there are at most ${r\choose 2}$ edges that share a pair of vertices with $\{v_1,\dots, v_r\}$ and, consequently, at most ${r\choose 2}$ matchings $M_i$ with some edge of $M_i$ intersecting $\{v_1,\dots, v_r\}$ in a set of size at least two. Thus, $a_i =r$ for all but at most ${r\choose 2}$ matchings in $I_2$ and, by \cref{bounds on intersections}, $a_i\geq 2$ for the remaining matchings. 
\\
Consequently, for every  $\{v_1,\dots, v_r\} \notin E(G)$, 
\begin{align}
\label{eqn: linearbound}
 \left| \bigcap_{j=1}^r S_{v_j} \right|=\sum_{i=1}^L\left|\bigcap_{j=1}^r R(v_j,i)\right| 
&< (1+\epsilon)\left( \left(L - \tbinom{r}{2}\right)p^rt + 
\tbinom{r}{2}p^2 t \right).
\end{align}
It remains to show  $\left| \cap_{j=1}^r S_{v_j} \right| <k$. Indeed, for large enough $k$ and large enough $L$, the ratio of $\left| \cap_{j=1}^r S_{v_j} \right|$ for a non-edge to an edge is, 
\begin{align*}
    \left(\frac{1+\epsilon}{1 - \epsilon}\right)\frac{(L - \tbinom{r}{2})p^{r}t + \tbinom{r}{2}p^2t}{pt} 
    < 3\left( L\cdot \frac{1}{4L} + \frac{\binom{r}{2}}{(4L)^{\frac{1}{r-1}}}\right) < \frac56.
\end{align*}
Thus, for every  $\{v_1,\dots, v_r\} \notin E(G)$, in view of \cref{eqn: linearbound},
\begin{align*}
 \left| \bigcap_{j=1}^r S_{v_j} \right|=\sum_{i=1}^L\left|\bigcap_{j=1}^r R(v_j,i)\right| 
&< (1+\epsilon)\left( \left(L - \tbinom{r}{2}\right)p^rt + 
\tbinom{r}{2}p^2 t \right) < \frac{5}{6}(1-\epsilon)pt < k.
\end{align*}
Thus, $G$ can be $k$-\textit{represented} by the set $S_1\cup S_2\cup \dots \cup S_L$ and for $A=577$, we have 
\[
    \Tilde{\theta}(G)\leq \theta_k(G) \leq Lt \leq A(r+1)L^{1+\frac{r}{r-1}}\log n= 
    A(r+1)L^{2 + \frac{1}{r-1}}\log n.
\]
\end{proof}
\begin{proof}[Proof of \cref{MainThm}]
    Given $r\geq 3$ and let $C_r = r^3(r+1) A$, $n_0$, $L_0$ be as in \cref{MainLemma}. Let $\Delta_0 = \lceil L_0/r \rceil $. For a graph $G$ on $n\geq n_0$ vertices with maximum degree $\Delta\geq \Delta_0$, by \cref{matchingdecomposition} $\chi'(G)\leq L = \Delta r$. Then by \cref{MainLemma}, 
    \begin{align*}
        \Tilde{\theta}(G)\leq Ar^3 \Delta^3\log n < C_r \Delta^3 \log n,
    \end{align*}
    and if $G$ is linear,
    \begin{align*}
        \Tilde{\theta}(G) \leq A(r+1) r^{2+\frac{1}{r-1}} \Delta^{2+\frac{1}{r-1}} \log n \leq C_r \Delta^{2+\frac{1}{r-1}}\log n.
    \end{align*}
\end{proof}
\section{Proof of Lower Bound}
The proof of the lower bound extends the approach used in \cite{eaton1996graphs} for the case where $r=2$.  
Fix $r\geq 3$. Whenever necessary, we will assume that $n_0$ is a large enough integer. Assume that $n\geq n_0$. Let $\cH^{(r)}(n,\Delta)$ be the collection of $r$-uniform graphs on the vertex set $[n]$ with bounded degree $\Delta$, and let $\cM^{(r)}(n)$ be the collection of all \textit{almost perfect} matchings of $r$-tuples on $[n]$. Each union of $\Delta$ matchings from $\cM^{(r)}(n)$ is a graph on $[n]$ with maximum degree $\Delta\leq n$, and consequently, 
\begin{align}
\label{eqn: numberofgraphs}
    |\cH^{(r)}(n,\Delta)|\geq {|\cM^{(r)}(n)|\choose \Delta} \geq \left(\frac{|\cM^{(r)}(n)|}{\Delta}\right)^{\Delta} \geq \left(\frac{|\cM^{(r)}(n)|}{n}\right)^{\Delta}.
\end{align}
\begin{claim} 
\label{claim: numberofmatchings}
For $r\geq 3$ and $n\geq n_0$, 
    \begin{align*}
        |\cM^{(r)}(n)|\geq \left(\frac{n}{er}\right)^{n/2}
    \end{align*}
\end{claim}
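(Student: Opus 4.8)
The plan is to count almost perfect matchings \emph{exactly} and then apply crude Stirling-type estimates. Write $s=\lfloor n/r\rfloor$, so that every almost perfect matching of $r$-tuples on $[n]$ consists of exactly $s$ pairwise disjoint edges and leaves $n-sr<r$ vertices uncovered. First I would set up a double count over the $n!$ permutations of $[n]$: reading off consecutive blocks of size $r$ from a permutation $\pi$ produces such a matching, and conversely a fixed matching $M$ (together with its uncovered set) arises from exactly $s!\,(r!)^s\,(n-sr)!$ permutations --- the factor $s!$ orders the blocks, $(r!)^s$ orders the vertices within each block, and $(n-sr)!$ orders the leftover vertices. Since this multiplicity is identical for every $M$, one obtains the exact identity
\[
    |\cM^{(r)}(n)| = \frac{n!}{s!\,(r!)^s\,(n-sr)!}.
\]

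Next I would lower-bound this quantity factor by factor. Using $n!\ge (n/e)^n$ in the numerator and, in the denominator, the bounds $s!\le s^s\le (n/r)^{n/r}$, then $(r!)^s\le r^{rs}\le r^{n}$ (as $rs\le n$), and $(n-sr)!\le r^r$ (as $n-sr<r$), I get
\[
    |\cM^{(r)}(n)| \ge \frac{(n/e)^n}{(n/r)^{n/r}\,r^n\,r^r} = \left(\frac{n}{er}\right)^n \left(\frac{r}{n}\right)^{n/r} r^{-r}.
\]

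It then remains to verify that this exceeds $\left(\tfrac{n}{er}\right)^{n/2}$, equivalently that $\left(\tfrac{n}{er}\right)^{n/2}(r/n)^{n/r}r^{-r}\ge 1$. Taking logarithms, the coefficient of $\log n$ is $\left(\tfrac{n}{2}-\tfrac{n}{r}\right) = \tfrac{r-2}{2r}\,n$, which is strictly positive for $r\ge 3$, so the leading term $\tfrac{r-2}{2r}\,n\log n$ dominates the remaining contributions, all of which are $O(n)$ or constant in $n$; hence the inequality holds once $n\ge n_0(r)$ is large enough. I expect the only genuine work to be this last estimate. It is worth noting that the bound is far from tight --- the true order is roughly $\left(\tfrac{n}{er}\right)^{n(r-1)/r}$ --- so there is a comfortable margin, and the one point to watch is precisely that the positive $n\log n$ term survives only for $r\ge 3$ (the coefficient vanishes at $r=2$), which is where the hypothesis $r\ge 3$ enters.
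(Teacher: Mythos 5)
Your proof is correct and follows essentially the same route as the paper: both compute the exact count $n!/\bigl(\lfloor n/r\rfloor!\,(r!)^{\lfloor n/r\rfloor}\,(n-r\lfloor n/r\rfloor)!\bigr)$ (you via a double count over permutations, the paper via a product of binomial coefficients divided by $q!$) and then apply crude Stirling-type bounds, concluding because the coefficient $\tfrac{r-2}{2r}n$ of $\log n$ is positive for $r\geq 3$. The minor differences in which factorial bounds are used are immaterial.
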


\begin{proof}
 Let $n = qr + s$ where $0\leq s<r$ and $q = \lfloor n/r \rfloor$. We have that $|\cM^{(r)}(n)|$ is at least
    \begin{align*}
        \frac{1}{q!}{n\choose r}{n-r\choose r}\cdots{r+s\choose r} &= \frac{1}{q!}\frac{n!}{(r!)^{q}s!}\geq \frac{n!}{(n/r)!(r^r)^{n/r}r!}.
    \end{align*}
        We use that $n! \geq \sqrt{2\pi n}(n/e)^{n}$ and consequently, we have, for $n\geq n_0(r)$
    \begin{align*}
        \frac{n!}{(n/r)!(r^r)^{n/r}r!}\geq \left(\frac{n}{er}\right)^{n}\frac{\sqrt{2\pi n}}{r^r}\frac{1}{(n/r)!} \geq  \left(\frac{n}{er}\right)^{n}.\frac{1}{(n/r)!}\geq \left(\frac{n}{er}\right)^{n/2}.
    \end{align*}

\end{proof}

\begin{proof}[Proof of \cref{MainThm2}]
Given any integer $t$, there are at most $(2^{t})^n$ distinct $r$-uniform hypergraphs on the vertex set $[n]$ that can be $k$-represented on the set $[t]$. Consequently, if $t$ is such that $|\cH^{(r)}(n,\Delta)|> 2^{tn}$, then there must exist some $G\in \cH^{(r)}(n,\Delta)$ that cannot be $k$-represented by a set of size $t$ for any $k$, and hence $\Tilde{\theta}(G) > t$. 
\\
In view of \cref{eqn: numberofgraphs} and \cref{claim: numberofmatchings},
\begin{align*}
    \log |\cH^{(r)}(n,\Delta)| &> \Delta\log \left(\frac{|\cM^{(r)}(n)|}{n}\right) \geq \Delta\log \left(\frac{\left(\frac{n}{er}\right)^{n/2}}{n} \right)\\
    & = \Delta\cdot\frac{n}{2}\left(\log \left(\frac{n}{n^{2/n}} \right)-\log(er)
\right)\\
    &\geq \Delta\cdot\frac{n}{4}\log n,
\end{align*}
for large enough $n > n_0(r)$. Consequently, for $t=\frac{1}{4}\Delta\log n $, we have that,
\begin{align*}
    |\cH^{(r)}(n,\Delta)|> 2^{tn}.
\end{align*}

\end{proof} 

\section{Concluding Remarks}
In this note we established upper and lower bounds 
on $\Tilde{\theta}(G)$ that differ by a factor of $O(\Delta^2)$, i.e. 
$\Omega(\Delta \log n) \leq \Tilde{\theta}(G) \leq O(\Delta^3 \log n)$. 
Closing the gap between these bounds is a problem 
of interest.
Further, since the lower bound in \cref{MainThm2} 
is nonconstructive 
it would be interesting to find an explicit construction 
that matches or improves our lower bound. 

\section{Acknowledgments}
We would like to thank the anonymous referees for their valuable comments and suggestions.

\bibliography{griffin}

\end{document}